\theoremstyle{definition}
\newtheorem{definition}{Definition}[section]
\newtheorem{example}[definition]{Example}
\theoremstyle{plain}
\newtheorem{lemma}[definition]{Lemma}
\newtheorem{theorem}[definition]{Theorem}
\begin{document}

\title{Unary and binary Leibniz algebras}

\author{N.A. Ismailov}

\address{Astana IT University, Nur-Sultan, Kazakhstan and Institute of Mathematics and Mathematical Modeling, Almaty, Kazakhstan}

\email{nurlan.ismail@gmail.com}

\author{A.S. Dzhumadil'daev}

\address{Kazakh-British Technical University and Institute of Mathematics and Mathematical Modeling, Almaty, Kazakhstan}

\email{dzhuma@hotmail.com}

\subjclass[2000]{17A30, 17A50}
% 17A30 Algebras satisfying other identities
% 17A50 Free algebras
\keywords{Leibniz algebras, binary Lie algebras, polynomial identities}

\thanks{The first author was supported by grant AP08052405 of MES RK}

 \maketitle

\begin{abstract} An algebra is said to be a unary Leibniz algebra if every one-generated subalgebra is a Leibniz algebra. An algebra is said to be a binary Leibniz algebra if every two-generated subalgebra is a Leibniz algebra. We give characterizations of unary and binary Leibniz algebras in terms of identities.   
\end{abstract}

\section{\label{nn}\ Introduction}

For a class of algebras $\mathcal{A}$, we let $\mathcal{A}_1$ denote the class of algebras such that every  one-generated algebra belongs to $\mathcal A.$ Similarly,   ${\mathcal A}_2$ is defined as the class of algebras such that every two-generated algebra belongs to ${\mathcal A}.$

An algebra $A$ is called  {\it unary } $\mathcal{A}$-algebra if $A\in \mathcal{A}_1.$ 
Similarly, $A$ is {\it binary} $\mathcal A$-algebra if $A\in \mathcal{A}_2.$ 

For example, let us consider $\mathcal{A}=\mathcal{A}s$, the class of associative algebras, that is, the class of algebras generated by the associative identity:
$$(a,b,c)=(ab)c-a(bc)=0.$$
By Artin's theorem \cite{ZSSS}, the class of binary associative algebras  coincides with the class of alternative algebras, i.e., ${\mathcal A}s_2$ is generated by the polynomial identities
$$(a,b,c)=-(b,a,c)=-(a,c,b).$$
It follows from Albert's theorem \cite{Albert} and \cite{Gainov} that over a field of characteristic zero, the class of unary associative algebras coincides with the class of power-associative algebras, i.e., $\mathcal{A}s_1$ is generated by the following identities:
$$(a,a,a)=0, \quad (aa,a,a)=0.$$

Another example concerns Lie algebras ${\mathcal A}=\mathcal{L}ie$, the class defined by the identities
$$ab+ba=0,$$
$$Jac(a,b,c)=0,$$
where $Jac(a,b,c)=(ab)c+(bc)a+(ca)b$ is the jacobian of elements $a,b,c$.
In this case, the class of binary Lie algebras $\mathcal{L}ie_2$ over a field of characteristic different from two  is defined by the following polynomial identities \cite{Gainov}:
$$ab+ba=0,$$
$$Jac(a,b,ab)=0.$$
We also note that the class of unary Lie algebras coincides with the class of anticommutative algebras.

In this paper we study the classes of unary and binary Leibniz algebras. Recall that (left)-Leibniz algebras are defined by the following identity:
$$(ab)c=a(bc)-b(ac).$$
Before formulating our results, let us define a notation. 

Let ${\bf K}$ be a field of characteristic not 2 and $A$ be an algebra over ${\bf K}$. For any $a,b,c\in A$ define $$\langle a,b,c\rangle=(ab)c-a(bc)+b(ac).$$ Then, clearly an algebra with the identity $\langle a,b,c\rangle=0$ becomes a Leibniz algebra. In the case of anticommutative  algebras, we recall that $$\langle a,b,c\rangle=Jac(a,b,c).$$
Our first result provides the characterization of unary algebras.
\begin{theorem}\label{first main theorem}
An algebra A over ${\bf K}$ is unary Leibniz if and only if it satisfies the following identities:
\begin{equation}\label{equn1}
\langle a,a,a\rangle=0,
\end{equation}
\begin{equation}\label{equn2}
\langle aa,a,a\rangle=0.
\end{equation}
\end{theorem}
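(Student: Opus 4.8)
The forward implication is immediate: if $A$ is unary Leibniz, then for every $a\in A$ the subalgebra $\langle a\rangle$ generated by $a$ is a Leibniz algebra, so $\langle u,v,w\rangle=0$ for all $u,v,w\in\langle a\rangle$; specializing $(u,v,w)$ to $(a,a,a)$ and to $(aa,a,a)$ yields (\ref{equn1}) and (\ref{equn2}). So the work is in the converse, and I would first reduce it to a statement about one free algebra. Let $\mathcal V$ be the variety of ${\bf K}$-algebras defined by (\ref{equn1}) and (\ref{equn2}), and let $G$ be its relatively free algebra on a single generator $x$; since both defining identities are homogeneous and involve only one variable, $G$ is a quotient of the free nonassociative algebra ${\bf K}\{x\}$ by a homogeneous ideal, so $G=\bigoplus_{n\ge1}G_n$ is graded by the degree in $x$, and every one-generated subalgebra of any algebra satisfying (\ref{equn1}) and (\ref{equn2}) is a homomorphic image of $G$. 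Hence, once we know that $G$ itself is a Leibniz algebra, every such algebra is unary Leibniz; together with the forward implication, the theorem is reduced to proving that $G$ satisfies the Leibniz identity.

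For this I would establish a normal form in $G$. Put $r_1=x$ and $r_{n+1}=x\,r_n$ for $n\ge1$ (the right-combed monomials, so $r_2=xx$). Bilinearly extending $r_1 r_n=r_{n+1}$ and $r_i r_j=0$ for $i\ge2$ defines a Leibniz algebra structure on $\bigoplus_{n\ge1}{\bf K}r_n$ (one verifies $\langle r_i,r_j,r_k\rangle=0$ directly), generated by $x$, hence a homomorphic image of $G$; in particular $r_n\ne0$ in $G$, so $\dim G_n\ge1$. I would then prove, by induction on the degree $n$, two assertions: (A) $uv\equiv0$ in $G$ for all monomials $u,v$ with $\deg u\ge2$; and (B) every monomial of degree $n$ equals a scalar multiple of $r_n$ modulo the identities (\ref{equn1}), (\ref{equn2}). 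Granting (A) and (B), the multiplication of $G$ is forced to agree with that of the model above, so $\dim G_n=1$ and $G$ is a Leibniz algebra, completing the proof.

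The computational input is the family of partial linearizations of (\ref{equn1}) and (\ref{equn2}) --- legitimate by extending scalars to ${\bf K}[t]$ and comparing coefficients: from (\ref{equn1}) one obtains $a^2a=0$ and $a^2b+(ab)a+(ba)a=0$, and combining these with (\ref{equn2}) one obtains $a^2a^2=0$ and its linearizations, e.g.\ $a^2(ab+ba)+(ab+ba)a^2=0$. Given (A) in degree $n$, assertion (B) in degree $n$ is routine: a degree-$n$ monomial is either $x\cdot(\text{a monomial of degree }n-1)$, to which (B) in degree $n-1$ applies, or $UV$ with $\deg U\ge2$, which (A) kills. The real difficulty --- and the step I expect to be the main obstacle --- is (A): after reducing via (B) in lower degrees, one must show $r_kr_j\equiv0$ for $k\ge2$, $k+j=n$, and the cubic consequence of (\ref{equn1}) alone is not enough, since applying it naively merely trades one such product for another of the same shape (for instance $r_kx$ for $x^2r_{k-1}$), giving a circular chain. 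Breaking this circle is where (\ref{equn2}) is genuinely used: the relation $a^2a^2=0$, together with its linearizations, makes sub-expressions such as $x^2x^2$ vanish, and substituting these back into the linearized identities collapses the remaining degree-$n$ products. The delicate point is to organize the induction so that the proof of (A) in degree $n$ appeals only to (A) and (B) in degrees $<n$ and to the vanishing of such lower-degree sub-expressions; once this is arranged, each individual reduction is a short linearized-identity manipulation.
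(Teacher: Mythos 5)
Your framework is sound and in fact mirrors the paper's: the forward direction is immediate, and the converse reduces to showing that in a one-generated subalgebra every monomial is a scalar multiple of the right-combed word $r_n$ (the paper's $e_n$) and that $r_kr_j=0$ for $k\ge 2$, after which the subalgebra is visibly the one-dimensional-per-degree Leibniz algebra you describe. The problem is that the entire content of the theorem is concentrated in exactly the step you defer: your assertion (A), which is Lemma \ref{helpful lemma2} of the paper. What you offer for it is a plan, not a proof. You correctly diagnose that linearizing (\ref{equn1}) alone only trades one product $r_kr_j$ for another of the same shape, but your proposed escape --- that $a^2a^2=0$ ``and its linearizations'' make $x^2x^2$ vanish and that ``substituting these back \dots collapses the remaining degree-$n$ products'' --- handles degree $4$ and then asserts the general case without an argument.

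Concretely, here is what is missing. At degree $n+1$ one needs three separate relations: (i) the alternating chain $e_ne_1=-e_{n-1}e_2=\cdots=(-1)^ne_2e_{n-1}$, obtained by applying the substitution $x\to x+t\,xx$ to the inductive hypothesis $e_ke_l=0$, which itself first requires proving $\Delta(e_k)=e_{k+1}$ by another inductive use of (A); (ii) $e_ne_1=-e_2e_{n-1}$, from the substitution $e_1\to e_{n-1}$ in $(aa)a=0$; and (iii) $e_{n-1}e_2=-e_2e_{n-1}$, from the substitution $e_1\to e_{n-2}$ in $(aa)(aa)=0$. These force $2e_ne_1=0$ when $n$ is even (comparing (i) and (ii)) and $2e_{n-1}e_2=0$ when $n$ is odd (comparing (i) and (iii)), and only then does the chain (i) kill every $e_ke_l$ of degree $n+1$. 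Note that this is where the hypothesis $\mathrm{char}\,{\bf K}\neq 2$ is used; your converse argument never invokes it, which is a reliable sign that the decisive computation is absent. Until you supply this (or an equivalent) derivation of $r_kr_j=0$ for $k\ge2$, the proof is incomplete.
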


The next theorem characterizes the binary Leibniz algebras.

\begin{theorem}\label{second main theorem}
An algebra A over ${\bf K}$ is binary Leibniz if and only if it satisfies the following identities:
\begin{equation}\label{eq1}
\langle a,a,b\rangle=0,
\end{equation}
\begin{equation}\label{eq2}
\langle a,b,a\rangle=0,
\end{equation}
\begin{equation}\label{eq3}
\langle a, b, ab\rangle=0.
\end{equation}
\end{theorem}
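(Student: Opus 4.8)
The plan is to prove the two directions separately. The "only if" direction is routine: if $A$ is binary Leibniz, then for any $a,b\in A$ the subalgebra $\langle a,b\rangle$ is Leibniz, so $\langle x,y,z\rangle=0$ for all $x,y,z$ in that subalgebra; specialising $(x,y,z)$ to $(a,a,b)$, $(a,b,a)$ and $(a,b,ab)$ yields (\ref{eq1}), (\ref{eq2}), (\ref{eq3}) respectively. The content is the "if" direction: assuming the three identities, I must show that the subalgebra generated by any two elements $a,b$ satisfies $\langle x,y,z\rangle=0$ identically.

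First I would record the immediate linearisations. Linearising (\ref{eq1}) gives $\langle a,b,c\rangle+\langle b,a,c\rangle=0$, i.e. $\langle -,-,c\rangle$ is antisymmetric in the first two slots; linearising (\ref{eq2}) gives $\langle a,b,c\rangle+\langle c,b,a\rangle=0$, antisymmetry in the outer slots. Combining these two, $\langle a,b,c\rangle$ is an alternating function of its three arguments — this is exactly the situation already noted in the introduction, $\langle a,b,c\rangle=\mathrm{Jac}(a,b,c)$ holds on the commutator structure, and in fact full antisymmetry of $\langle-,-,-\rangle$ forces the product $ab$ itself to be anticommutative. So after this step the problem is: an anticommutative algebra in which $\mathrm{Jac}(a,b,c)$ is alternating (automatic) and $\mathrm{Jac}(a,b,ab)=0$; show every two-generated subalgebra is Lie, equivalently satisfies the Jacobi identity. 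This is precisely Gainov's characterisation of binary Lie algebras quoted in the introduction, so the third identity (\ref{eq3}) is doing exactly the work it does there. I would either cite that result directly or reproduce its short argument.

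The structural heart is therefore: in a two-generated anticommutative algebra $B=\langle a,b\rangle$ with $\mathrm{Jac}(a,b,ab)=0$, prove $\mathrm{Jac}=0$ on all of $B$. The standard approach is to set up a spanning set for $B$ built from left-normed (or suitably normed) products in $a,b$, and show by induction on degree that $\mathrm{Jac}$ vanishes on any triple of such monomials, using anticommutativity to reduce the number of "shapes" and using $\mathrm{Jac}(a,b,ab)=0$ together with its linearisations/consequences as the base-degree input. One multiplies out $\mathrm{Jac}(u,v,w)$ for monomials $u,v,w$, rewrites using the defining Leibniz-type relation $\langle x,y,z\rangle=0$ wherever a monomial has lower complexity, and tracks that every term either cancels by anticommutativity or is covered by the inductive hypothesis.

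The main obstacle I anticipate is the bookkeeping in this induction: controlling the combinatorial explosion of monomial shapes of a given degree in a nonassociative two-generated algebra, and verifying that the single relation $\mathrm{Jac}(a,b,ab)=0$ (plus polarisations) really suffices to kill $\mathrm{Jac}$ on all of them. A clean way to organise this is to use the free anticommutative algebra on $\{a,b\}$ modulo the identities, exhibit an explicit finite or graded-finite basis in low degrees, check the Jacobi identity there by direct computation, and then show that no new obstruction can appear in higher degree because any higher-degree triple reduces, via the relations, to lower-degree ones. I would expect to need to handle characteristic $\neq 2$ carefully (the hypothesis on ${\bf K}$), since the linearisations of (\ref{eq1})--(\ref{eq2}) and the passage between $\langle a,a,b\rangle$ and its polarisation cost a factor of $2$, and possibly to verify there is no characteristic-$3$ subtlety in recovering Jacobi from its specialisations — if there is, I would treat that case separately or note the restriction.
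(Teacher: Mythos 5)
There is a genuine gap, and it is fatal to the proposed strategy. You claim that full antisymmetry of $\langle -,-,-\rangle$ ``forces the product $ab$ itself to be anticommutative,'' and you then reduce the whole problem to Gainov's theorem on binary Lie algebras. This inference is false. The linearisation of (\ref{eq1}) is $\langle a,b,c\rangle+\langle b,a,c\rangle=(ab+ba)c=0$, which says only that $ab+ba$ annihilates everything from the left; it does not say $ab+ba=0$. The paper's own Example \ref{ex2} is a counterexample to your claim: the two-dimensional algebra with $e_1^2=e_2$ is a Leibniz algebra, hence binary Leibniz, hence satisfies (\ref{eq1})--(\ref{eq3}), yet it is not anticommutative. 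More generally $\mathcal{L}eib\subset\mathcal{L}eib_2$ while $\mathcal{L}eib\not\subset\mathcal{L}ie_2$, so the class you would be reducing to (binary Lie algebras) simply does not contain the class the theorem is about; the entire content of the theorem lies in the non-anticommutative case, which your reduction discards at the outset.

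This is not merely a bookkeeping issue that a more careful induction would fix: the authors point out explicitly that the identity $\Delta(u)=uy$ (for the differential substitution $x\to xy$), which drives Gainov's argument in the binary Lie case, fails for binary Leibniz algebras, and the technical heart of their proof is the replacement identity $(\Delta(u)+uy)v=2(uy)v$, proved by its own induction. Their argument then combines this with the skew-symmetry of $\langle a,b,c\rangle$ and of $\langle ab,c,d\rangle$ in $a,b$ (Lemma \ref{helpful lemma}), a double induction reducing general triples $\langle u,v,w\rangle$ first to $\langle u,v,x\rangle$ and then to $\langle u,x,y\rangle$ via differential substitutions, and finally the Loday--Pirashvili result that right-normed monomials span the free Leibniz algebra, so that every element of degree $n-1$ is a combination of elements $yu$ and $xv$ covered by (\ref{eq14}) and (\ref{eq15}). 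None of this machinery is visible in your sketch, and the induction you outline (on triples of monomials in an anticommutative algebra) addresses the wrong algebra. Your ``only if'' direction and the computation of the linearisations (\ref{eq7})--(\ref{eq9}) are correct, but the proof of the substantive direction would have to be rebuilt from scratch.
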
 

Identities (\ref{equn1}) and (\ref{equn2}) are equivalent, respectively, to the following identities:
$$(aa)a=0,$$
$$(aa)(aa)=0.$$
Expanded forms of identities (\ref{eq1}), (\ref{eq2})  and (\ref{eq3}) are given below:
$$(aa)b=0,$$
$$b(aa)=-(a,b,a),$$
$$(ab)(ab)=a(b(ab))-b(a(ab)).$$

It is well known that every Lie algebra is a Malcev algebra and every Malcev algebra is a binary Lie algebra. However, there exists a binary Lie algebra which is not Malcev, and there is a Malcev algebra which is not Lie \cite{Kuzmin}. Since Leibniz algebras are noncommutative generalizations of Lie algebras, any binary Lie algebra is binary Leibniz. By Theorem \ref{second main theorem} it follows that any Leibniz algebra is binary Leibniz and every binary Leibniz algebra is unary Leibniz. Moreover, Theorem \ref{first main theorem} gives that any unary Lie algebra is unary Leibniz. The picture of inclusions looks as follows: 

$$
\begin{array}{c c c c c c c}
\mathcal{L}ie  & \subset  & \mathcal{M}alc & \subset & \mathcal{L}ie_2  &\subset & \mathcal{L}ie_1\\
\rotatebox{-90}{$\subset$}& &                  &&\rotatebox{-90}{$\subset$}& & \rotatebox{-90}{$\subset$}\\
\mathcal{L}eib& \subset &    & \subset &\mathcal{L}eib_2 & \subset &  \mathcal{L}eib_1\\
\end{array}
$$

We observe that the inclusions above are strict. In what follows, we give examples that confirm these facts.

\begin{example}\label{ex1} 
({\it Non-Leibniz, but  binary Leibniz algebra}). Let $A=\{e_1,e_2,e_3,e_4\}$ be a four dimensional anticommutaive algebra with the multiplication table
$$e_1e_2=e_3, \quad  e_1e_4=e_1, \quad e_2e_4=e_2, \quad e_3e_4=-e_3$$
and zero products are omitted. In this case, $A$ becomes a non-Lie Malcev algebra \cite{Kuzmin}, hence $A$ is binary Leibniz. However, $A$ is not Leibniz as $\langle e_1,e_2,e_4\rangle=-3e_3\neq 0.$ 
\end{example} 
\begin{example}\label{ex2}  
 ({\it Non-binary  Lie, but binary Leibniz algebra}). Let $B=\{e_1,e_2\}$ be a two dimensional algebra with the multiplication table
$$e_1^2=e_2$$
and zero products are omitted. Since it is not anticommutative, $B$ is not binary Lie. On the other hand, $B$ is a Leibniz algebra which makes $B$ into a binary Leibniz algebra.
\end{example}
\begin{example} \label{ex3} 
({\it Non-binary Lie, non-Leibniz, but binary Leibniz algebra}). Direct sum of two algebras defined above $A\oplus B$ gives an example of a binary Leibniz algebra that is neither Leibniz nor binary Lie. 
\end{example} 
\begin{example} \label{ex4} 
({\it Non-binary, non-unary Lie, but unary Leibniz algebra}). Let $C=\{e_1,e_2,e_3,e_4\}$ be a four dimensional   non-anticommutative algebra with the multiplication table
$$e_1e_1=e_4e_4=e_2, \quad e_1e_2=e_3$$
and zero products are omitted. Then  $C$ is unary Leibniz, but it is not binary Leibniz as $\langle e_1+e_4,e_4,e_1+e_4\rangle=-e_3\neq 0.$ 
\end{example}
\begin{example}\label{ex5}  
({\it Non-binary Lie, but  unary Lie algebra}). Let $D=\{e_1,e_2,e_3\}$ be a three dimensional anticommutative algebra with the multiplication table
$$e_1e_2=e_3,  \quad e_1e_3=e_1,  \quad e_2e_3=e_2$$
and zero products are omitted. $D$ is not a binary Lie algebra, since $Jac(e_1,e_2,e_1e_2)=-2e_3\neq 0$.
\end{example}

In the proof of Theorems \ref{first main theorem} and \ref{second main theorem} we use 
a linearization method. Let $a,b,f\in A$ and $\Delta_a^1(b)$ be a partial linearization mapping of $A.$ We refer to \cite[Chapter 1]{ZSSS} for properties of partial linearization mappings. 

%Recall its definition. We suppose that $f$ is a linear combination of monomials on some variable  one of which is $a.$ We change  in each monomial the variable $a$ by $b$ consecutively from left to right. The polynomial $(f)\bigtriangleup_a^1(b) $ is a linear combination of such monomials. For example, if $f=f(a,b,c,d)=2(ab)(ac)- 3(d(aa))b,$ then
%$$(f)\bigtriangleup_a^1(b) =2(bb)(ac)+2(ab)(bc)-3(d(ba))b-3(d(ab))b.$$

In fact we write $\Delta(f)$  instead of $(f)\Delta_a^1(b)$ and call such an operation {\it differential substitution} $a\rightarrow b$ in $f$. The exact meaning of $\Delta(f)$ will be clear from the context. We let $deg\,(f)$ denote the degree of an element $f.$ 

In the binary Lie case, for any $u$ there is a relation $\Delta(u)=uy$ when $x\rightarrow xy$ in $u$ \cite{Gainov}. However, in general,  this relation is not valid in the case of binary Leibniz. Therefore, we give additional binary Leibniz arguments to show that 
$$(\Delta(u)+uy)v=2(uy)v.$$
This observation is essential in the proof of Theorem \ref{second main theorem}.

\section{\label{nn}\ Proof of Theorem \ref{first main theorem}}

Let $A$ be an algebra with identities (\ref{equn1}) and (\ref{equn2}). For some element $x\in A$ set $e_n=\underbrace{x(\cdots(xx)\cdots)}_{n\text{-times}}.$  It is clear that $e_1e_n=e_{n+1}.$

The next statement is crucial in the proof of Theorem \ref{first main theorem}.

\begin{lemma}\label{helpful lemma2}
An algebra with identities (\ref{equn1}) and (\ref{equn2}) satisfies the identities $e_ke_l=0$ for any $k>1$, $l>0$. 
\end{lemma}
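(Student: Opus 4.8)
The plan is to prove the lemma by induction on the total degree $N=k+l$, showing $e_ke_l=0$ for all $k\ge 2$, $l\ge 1$ with $k+l=N$.

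The base cases $N=3,4$ come straight from the identities. Expanding $\langle x,x,x\rangle=0$ gives $(xx)x=e_2e_1=0$; inserting this into $\langle xx,x,x\rangle=0$ leaves $(xx)(xx)=e_2e_2=0$; and applying the differential substitution $a\to e_2$ to one letter of $(aa)a$ in (\ref{equn1}), at $a=x$, gives $(e_2x)x+(xe_2)x+(xx)e_2=0$, whence $e_3e_1=0$ (the first and last terms already vanish).

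For the inductive step, fix $N\ge 5$ and assume $e_ie_j=0$ for all $i\ge 2$, $j\ge 1$ with $i+j<N$. The first observation is that, modulo this hypothesis, every product of the $e_i$'s of total degree $N$ reduces — via $e_1e_m=e_{m+1}$ — either to $0$, to $e_N$, or to one of the \emph{atoms} $A_p:=e_pe_{N-p}$, $2\le p\le N-1$; so it suffices to prove $A_2=\dots=A_{N-1}=0$. To get relations among the $A_p$ I would substitute powers $e_i$ of $x$ into linearizations of (\ref{equn1}) and (\ref{equn2}) and simplify using the inductive hypothesis. From (\ref{equn1}): its once-polarized form $(ba)a+(ab)a+(aa)b=0$ at $a=x$, $b=e_{N-2}$ gives $A_{N-1}+A_2=0$, while its fully linearized form $(ab)c+(ba)c+(ac)b+(ca)b+(bc)a+(cb)a=0$ (two successive differential substitutions in $(aa)a$) at $a=e_i$, $b=e_j$, $c=x$ with $i,j\ge 2$, $i+j=N-1$ gives $e_{i+1}e_j+e_{j+1}e_i=0$, i.e. $A_p+A_q=0$ whenever $p+q=N+1$ and $p,q\ge 3$. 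From (\ref{equn2}): its once-polarized form at $a=x$, $b=e_{N-3}$ gives $A_2+A_{N-2}=0$, while polarizing $(aa)(aa)$ down to three letters and evaluating two of them at $e_i,e_j$ (the rest at $x$), with $i,j\ge 2$, $i+j=N-2$, gives $e_{i+1}e_{j+1}+e_{j+1}e_{i+1}=0$, i.e. $A_p+A_q=0$ whenever $p+q=N$ and $p,q\ge 3$.

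It remains to solve this system. The two ``interior'' families say $A_p=-A_{N+1-p}$ and $A_p=-A_{N-p}$; combining them gives $A_p=A_{p+1}$ on the overlap of the index ranges, so $A_3=A_4=\dots=A_{N-2}$ equals one element $c$, and then $A_p=-A_{N-p}$ at the central index forces $c=-c$, hence $c=0$ since $\mathrm{char}\,{\bf K}\ne 2$. The two leftover relations then give $A_2=-A_{N-2}=0$ and $A_{N-1}=-A_2=0$, closing the induction. (For $N=5,6$ the same relations still determine everything; e.g. for $N=5$ the fully linearized (\ref{equn1}) yields $2A_3=0$ directly.) I expect the crux to be precisely this bookkeeping of relations: each of (\ref{equn1}), (\ref{equn2}) by itself leaves roughly half the atoms undetermined — being cubic, (\ref{equn1}) pairs atoms with index-sum $N+1$; being quartic, (\ref{equn2}) pairs them with index-sum $N$ — and only the incompatibility of the two pairings collapses the system. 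A minor technical caveat is to stay with iterated partial linearizations, which are the ones valid for $\mathrm{char}\,{\bf K}\ne 2$.
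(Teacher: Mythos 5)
Your proof is correct, and I could not find a gap: the base cases, the reduction to the atoms $A_p=e_pe_{N-p}$, each of the four families of relations (I checked that in every polarized term the inductive hypothesis really does kill everything except the two surviving atoms, whether one polarizes $(aa)(aa)$ or $\langle aa,a,a\rangle$ itself), and the final linear algebra including the boundary cases $N=5,6$ all hold up. The overall strategy coincides with the paper's --- induction on $k+l$, differential substitutions evaluated at powers of $x$, a small linear system among the degree-$N$ products, and a final division by $2$ --- and in fact your relations $A_2+A_{N-1}=0$ and $A_2+A_{N-2}=0$ are literally the paper's, obtained from the same substitutions $e_1\rightarrow e_{N-2}$ in $e_2e_1=0$ and $e_1\rightarrow e_{N-3}$ in $e_2e_2=0$. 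Where you genuinely diverge is in the remaining relations. The paper's key device is to differentiate the already-established degree-$(N-1)$ identities $e_ke_l=0$ under the substitution $e_1\rightarrow e_2$: after an auxiliary computation showing $\Delta(e_k)=e_{k+1}$ modulo the induction hypothesis, this yields the alternating chain $e_{k+1}e_l+e_ke_{l+1}=0$, i.e. $A_k+A_{k+1}=0$ for consecutive atoms, and the parity clash with the two extra relations finishes the step. You never differentiate the inductively known identities; instead you take first- and second-order polarizations of (\ref{equn1}) and (\ref{equn2}) and obtain the two ``reflection'' families $A_p=-A_{N+1-p}$ and $A_p=-A_{N-p}$. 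The trade-off: the paper avoids the second-order polarization of the quartic identity and keeps only three relations, at the cost of the sub-lemma $\Delta(e_k)=e_{k+1}$; your route dispenses with that sub-lemma entirely, at the cost of a slightly longer bookkeeping step. Your closing caveat --- work with iterated partial linearizations rather than full multilinearization followed by division --- is exactly the right precaution for $\mathrm{char}\,{\bf K}\neq 2$.
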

\begin{proof}
We prove our statement by induction on degree $n$ of $e_ke_l$, where $n=k+l$. 

If $n=3$, then from identity (\ref{equn1}) we have $e_2e_1=0.$ 

Assume $n=4$. The relation $e_2e_2=0$ follows from identity (\ref{equn2}). Let us prove $e_3e_1=0$. Using   the differential substitution $e_1\rightarrow e_2$ in $e_2e_1=0$, we obtain
$$0=\Delta(e_2e_1)=(e_2e_1)e_1+(e_1e_2)e_1+e_2e_2=(e_1e_2)e_1=e_3e_1.$$

Suppose that $e_ke_l=0$ if $deg\,(e_ke_l)\le n$ and $k>1, l>0.$ 
We prove our statement for $n+1$. 

The differential substitution $e_1\rightarrow e_2$ in $e_k$ 
gives us
$$\Delta(e_k)=e_2(e_{k-1})+e_1(e_2e_{k-2})+\ldots+e_1(e_1(e_1(\cdots(e_2e_1)\cdots)))+e_{k+1}.$$
By the induction hypothesis we have $$e_2e_{k-1}=0,\quad e_1(e_2e_{k-2})=0, \ldots, e_1(\cdots (e_1(e_2e_1))\cdots)=0.$$ Therefore, $\Delta(e_k)=e_{k+1}$ for $k<n$. Hence
$$\Delta(e_ke_l)=e_{k+1}e_l+e_ke_{l+1}=0$$
for all $k>1$ and $l>0$ with $k+l=n.$
So, we derive 
\begin{equation}\label{equ1}e_ne_1=-e_{n-1}e_2=\cdots=(-1)^ne_2e_{n-1}.\end{equation}

Let us make the  differential substitution $e_1\rightarrow e_{n-1}$ in $e_2e_1=0$.  By the induction hypothesis we get
$$\Delta(e_2e_1)=(e_{n-1}e_1)e_1+(e_1e_{n-1})e_1+e_2e_{n-1}=e_ne_1+e_2e_{n-1}=0,$$
that is, 
 \begin{equation}\label{equ2}e_ne_1=-e_2e_{n-1}.\end{equation}
Now we use the differential substitution $e_1\rightarrow e_{n-2}$ in $e_2e_2=0$.
By the induction hypothesis we have
$$\Delta(e_2e_2)=(e_{n-2}e_1)e_2+(e_1e_{n-2})e_2+e_2(e_{n-2}e_1)+e_2(e_1e_{n-2})=$$
$$e_{n-1}e_2+e_2e_{n-1}=0.$$ 
So,
\begin{equation}\label{equ3}e_{n-1}e_2=-e_2e_{n-1}.\end{equation} 
   
If $n$ is even, then (\ref{equ1}) and (\ref{equ2}) imply $e_ne_1=0$.
If $n$ is odd, then by (\ref{equ1}) and (\ref{equ3}) we obtain $e_{n-1}e_2=0.$
In both cases by (\ref{equ1}) we see that
 $e_ke_l=0$ for any $k>1,l>0$ with $k+l=n+1$.  
\end{proof}

{\it Proof of Theorem \ref{first main theorem}.}  Since the identities (\ref{equn1}) and (\ref{equn2})  depend only on one variable, any unary Leibniz algebra satisfies these identities  over a field of any characteristic. 

Let $x\in A$ and $alg\langle x\rangle$ be a one-generated subalgebra of $A$ on $x$. We show that, for any $u,v,w\in alg\langle x\rangle$,
$$\langle u,v,w\rangle=0.$$ 
By Lemma  \ref{helpful lemma2} it is enough to check it for elements   $u=e_k$, $v=e_l$ and $w=e_m.$

If $k=l=1$, then
$$\langle e_1,e_1,e_m\rangle=(e_1e_1)e_m-e_1(e_1e_m)+e_1(e_1e_m)=e_2e_m=0.$$
If $k=1$ and $l>1$, then
$$\langle e_1,e_l,e_m\rangle=(e_1e_l)e_m-e_1(e_le_m)+e_l(e_1e_m)=e_{l+1}e_m+e_le_{m+1}=0.$$
If $k>1$ and $l=1$, then
$$\langle e_k,e_1,e_m\rangle=(e_ke_1)e_m-e_k(e_1e_m)+e_1(e_ke_m)=-e_ke_{m+1}=0.$$
If $k>1$ and $l>1$, then
$$\langle e_k,e_l,e_m\rangle=(e_ke_l)e_m-e_k(e_le_m)+e_l(e_ke_m)=0. \quad \square $$

\section{\label{nn}\ Proof of Theorem \ref{second main theorem}}

Let $A$ be an algebra with identities (\ref{eq1}), (\ref{eq2}) and (\ref{eq3}).

The complete linearizations of these identities are respectively equivalent to 
 \begin{equation}\label{eq7}
\langle a,b,c\rangle+\langle b,a,c\rangle=0,
\end{equation}
\begin{equation}\label{eq8}
\langle a,b,c\rangle+\langle c,b,a\rangle=0,
\end{equation}
\begin{equation}\label{eq9}
\langle a,b,cd\rangle+\langle a,d,cb\rangle+\langle c,b,ad\rangle+\langle c,d,ab\rangle=0.
\end{equation}

\begin{lemma}\label{helpful lemma}
For all $a,b,c,d\in A$, we have
 
(a) $\langle a,b,c\rangle$ is skew-symmetric in $a,b,c$.

(b) $\langle ab,c,d\rangle+\langle ba,c,d\rangle=0.$
\end{lemma}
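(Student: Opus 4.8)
The plan is to establish part (a) first, since part (b) will follow quickly from (a) together with the linearized binary-Leibniz relation \eqref{eq9}. For part (a), recall that the bracket $\langle a,b,c\rangle=(ab)c-a(bc)+b(ac)$ is by construction already skew-symmetric in the first two slots only when we know the Leibniz identity holds — but here we only have the three identities \eqref{eq1}--\eqref{eq3} and their full linearizations \eqref{eq7}--\eqref{eq9}. So the first step is to read off what \eqref{eq7} and \eqref{eq8} say: \eqref{eq7} gives $\langle a,b,c\rangle=-\langle b,a,c\rangle$ (antisymmetry in the first two arguments), and \eqref{eq8} gives $\langle a,b,c\rangle=-\langle c,b,a\rangle$ (antisymmetry in the outer two arguments). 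The symmetric group $S_3$ on the three slots is generated by the transpositions $(1\,2)$ and $(1\,3)$, so these two relations already force $\langle a,b,c\rangle$ to change sign under every transposition, hence to be totally skew-symmetric. That is all of part (a), and it is essentially immediate once one observes that \eqref{eq7} and \eqref{eq8} are precisely the two generating antisymmetries. (One should double-check that char $\neq 2$ is what allows us to pass from the un-linearized identities \eqref{eq1}, \eqref{eq2} to their full linearizations \eqref{eq7}, \eqref{eq8}, but that is already asserted in the paragraph preceding the lemma.)

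For part (b), the goal is $\langle ab,c,d\rangle+\langle ba,c,d\rangle=0$, i.e.\ that the bracket is symmetric under swapping the two factors of a product sitting in its first slot. The natural tool is the linearized form \eqref{eq3}, namely \eqref{eq9}:
\begin{equation*}
\langle a,b,cd\rangle+\langle a,d,cb\rangle+\langle c,b,ad\rangle+\langle c,d,ab\rangle=0.
\end{equation*}
This is a statement about a product occupying the \emph{third} slot, not the first, so the plan is to combine it with the total skew-symmetry from part (a) to move the product into the first slot. Concretely, applying part (a) to each term of \eqref{eq9} — for instance $\langle a,b,cd\rangle=\langle cd,a,b\rangle$ and $\langle c,d,ab\rangle=\langle ab,c,d\rangle$, and similarly for the other two terms $\langle a,d,cb\rangle=\langle cb,a,d\rangle$ and $\langle c,b,ad\rangle=\langle ad,c,b\rangle$ — rewrites \eqref{eq9} as a relation among brackets whose first argument is a product. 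Then I would make a judicious specialization of the free variables $a,b,c,d$ (the obvious candidates are setting two of them equal, e.g.\ $d=b$ or $a=c$, or a substitution that collapses $cb$ and $cd$ appropriately) so that two of the four terms coincide or cancel and the surviving combination is exactly $\langle ab,c,d\rangle+\langle ba,c,d\rangle$ (up to a sign and a factor of $2$, which is invertible).

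The main obstacle I anticipate is the bookkeeping in that last step: one must choose the specialization in \eqref{eq9} so that, after rewriting via skew-symmetry, the four terms organize into the desired two-term sum without leaving extra uncontrolled brackets behind. A clean route is probably to first symmetrize \eqref{eq9} itself — observe that the left-hand side of \eqref{eq9} is already visibly invariant under the simultaneous swap $a\leftrightarrow c$ and under $b\leftrightarrow d$, and also, using part (a), invariant under swapping the pair $\{a,b\}$ with the pair $\{c,d\}$ — and then exploit these symmetries to reduce \eqref{eq9} to the statement that $\langle xy,z,w\rangle$ is unchanged under $x\leftrightarrow y$. If a direct specialization proves awkward, the fallback is to expand everything in terms of the original (non-associative) products using the definition of $\langle-,-,-\rangle$ and verify the identity by brute force, using \eqref{eq1} and \eqref{eq2} (equivalently $(aa)b=0$ and $b(aa)=-(a,b,a)$) and their linearizations to kill the unwanted terms; this is routine but tedious, and I would only resort to it if the conceptual argument via \eqref{eq9} and part (a) does not close cleanly.
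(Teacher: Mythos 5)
Part (a) of your proposal is correct and is exactly the paper's argument: \eqref{eq7} and \eqref{eq8} give antisymmetry under the transpositions $(1\,2)$ and $(1\,3)$, which generate $S_3$. No issue there.

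Part (b) has a genuine gap, and the gap is in what you treat as the main route. Your plan is to start from \eqref{eq9}, use part (a) to move the products into the first slot, and then find a ``judicious specialization'' producing $\langle ab,c,d\rangle+\langle ba,c,d\rangle$; but that specialization is never exhibited, and it is doubtful it exists in the form you imagine. If you symmetrize \eqref{eq9} over $a\leftrightarrow b$ and use \eqref{eq7} to kill the pair $\langle a,b,cd\rangle+\langle b,a,cd\rangle$, you reduce the claim to the vanishing of $\langle a,d,cb\rangle+\langle c,a,bd\rangle+\langle c,b,ad\rangle+\langle b,d,ca\rangle$, whose third slots $cb,\,bd,\,ad,\,ca$ do not match the $\{zw,zy,xw,xy\}$ pattern of \eqref{eq9}, so you are left proving a new identity rather than closing the argument. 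There is also a structural reason this route is awkward: passing through part (a) converts the goal into $\langle c,d,ab+ba\rangle=0$, a statement about $ab+ba$ as a \emph{right} factor, which is precisely what the available identities do not directly control.

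What you relegate to a ``routine but tedious'' fallback is in fact the paper's proof, and it is two lines, not brute force. Identity \eqref{eq7}, expanded, is literally $(ab)c+(ba)c=0$ for all $a,b,c$ (the terms $a(bc)$ and $b(ac)$ cancel in the sum $\langle a,b,c\rangle+\langle b,a,c\rangle$). In
$$\langle ab,c,d\rangle=((ab)c)d-(ab)(cd)+c((ab)d)$$
the element $ab$ occurs in each of the three terms only as a left factor of a product, so replacing $ab$ by $ba$ flips the sign of each term, giving $\langle ab,c,d\rangle=-\langle ba,c,d\rangle$ at once. Neither \eqref{eq9} nor the linearization of \eqref{eq2} is needed; you should promote this from fallback to the proof.
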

\begin{proof}

(a) It follows from the identities (\ref{eq7}) and (\ref{eq8}).

(b) The identity (\ref{eq7}) implies $(ab)c=-(ba)c.$ 
Therefore, we have
 $$\langle ab,c,d\rangle=((ab)c)d-(ab)(cd)+c((ab)d)=$$$$-((ba)c)d+(ba)(cd)-c((ba)d)=-\langle ba,c,d\rangle.$$ 
\end{proof}
{\it Proof of Theorem \ref{second main theorem}.} Since the identities (\ref{eq1}), (\ref{eq2}) and (\ref{eq3}) depend only on two variables, any binary Leibniz algebra satisfies these identities over a field of any characteristic. 

Let $x,y\in A$ and $alg\langle x,y\rangle$ be a two-generated subalgebra of $A$ on $x,y$. We show that 
$$\langle u,v,w\rangle=0$$ for any $u,v,w\in alg\langle x,y\rangle$. We prove this statement by induction on $deg\,(\langle u,v,w\rangle)$. If $deg\,(\langle u,v,w\rangle)$ is 3 or 4, then by identities (\ref{eq1}), (\ref{eq2}) and (\ref{eq3}) our statement is obvious. 

Assume that $\langle u,v,w\rangle=0$ if  $deg\,(\langle u,v,w\rangle)$ is not greater than $n$. Suppose $deg\,(\langle u,v,w\rangle)=n+1$. If $deg\,(w)\geq 2$, then by the differential substitution $x\rightarrow w$ in $\langle u,v,x\rangle=0$ we obtain 
$$\Delta(\langle u,v,x\rangle)=\langle \Delta(u),v,x\rangle+\langle u,\Delta(v),x\rangle+\langle u,v,w\rangle=0$$
So, once we prove that all triples of the form  $\langle u,v,x\rangle$ of degree $n+1$ are zero in $A$, we can get $\langle u,v,w\rangle=0$. 

Now we show that, in fact, $\langle u,v,x\rangle$ of degree $n+1$ is a linear combination of triples of the form $\langle u,x,y\rangle$. Suppose $deg\,(v)\geq 2$ and consider the differential substitution $y\rightarrow v$ in $\langle u,y,x\rangle=0$. Then we have $$\Delta(\langle u,y,x\rangle)=\langle \Delta(u),y,x\rangle+\langle u,v,x\rangle+\langle u,y,\Delta(x)\rangle=0.$$  It gives the following relation:
 $$\langle u,v,x\rangle=-\langle \Delta(u),y,x\rangle.$$
To complete the proof it is left to show that all triples of the form $\langle u,x,y\rangle$ are zero in $A$.  

Suppose $deg\,(\langle u,x,y\rangle)=n$. The differential substitution $x\rightarrow xy$ in $\langle u,x,y\rangle$ gives us the following relation: 
$$\Delta(\langle u,x,y\rangle)=\langle \Delta(u),x,y\rangle+\langle u,\Delta(x),y\rangle+\langle u,x,\Delta(y)\rangle=0.$$
It follows 
\begin{equation}\label{eq10}
\langle \Delta(u),x,y\rangle+\langle u,xy,y\rangle=0.
\end{equation}
By the differential substitution $x\rightarrow u$ in $\langle xy,x,y\rangle$ we obtain  
$$\Delta(\langle xy,x,y\rangle)=\langle uy,x,y\rangle+\langle xy,\Delta(x),y\rangle+\langle xy,x,\Delta(y)\rangle=0.$$
It follows 
\begin{equation}\label{eq11}
\langle uy,x,y\rangle+\langle xy,u,y\rangle=0.
\end{equation}
Take the sum of (\ref{eq10}) and (\ref{eq11}).  By  Part (a) of Lemma \ref{helpful lemma} we have
\begin{equation}\label{eq12}
\langle \Delta(u)+uy,x,y\rangle=0.
\end{equation}
Now we show that for some $v\in A$ in the differential substitution $x\rightarrow xy$ in $u$ of degree $n-2$, the relation
\begin{equation}\label{eq13}
(\Delta(u)+uy)v=2(uy)v
\end{equation} holds.
We prove it by induction on degree of $u$. If $u=xy$, then it is obvious. Suppose $u=yx$. Then by (\ref{eq7}) 
$$(\Delta(yx)+(yx)y)v=$$$$(y(xy)+(yx)y)v=((yx)y+(yx)y)v=2((yx)y)v.$$
Assume that it is true for all $u$ whose degree is less than $n-2$. Suppose $deg\,(u)=n-2$ and $u=u_1u_2$. Since $deg\,((u_1u_2)y)=n-1$, by the first induction hypothesis we can write $(u_1u_2)y=u_1(u_2y)-u_2(u_1y)$. Then by (\ref{eq7})  and the induction hypothesis we have
$$(\Delta(u)+uy)v=$$$$(\Delta(u_1u_2)+(u_1u_2)y)v=(\Delta(u_1)u_2+u_1\Delta(u_2)+(u_1u_2)y)v=$$
$$(\Delta(u_1)u_2+u_1\Delta(u_2)+u_1(u_2y)-u_2(u_1y))v=$$$$(\Delta(u_1)u_2-\Delta(u_2)u_1-(u_2y)u_1+(u_1y)u_2)v=$$$$ ((\Delta(u_1)+u_1y)u_2-(\Delta(u_2)+u_2y)u_1)v=(2(u_1y)u_2-2(u_2y)u_1)v=$$$$2((u_1y)u_2-(u_2y)u_1)v=2(u_1(u_2y)-u_2(u_1y))v=2((u_1u_2)y)v=$$
$$2(uy)v.$$

So by (\ref{eq12}), (\ref{eq13}) and by Part (b) of Lemma \ref{helpful lemma} we can derive $$0=\langle \Delta(u)+uy,x,y\rangle=\langle 2uy,x,y\rangle=2\langle uy,x,y\rangle=-2\langle yu,x,y\rangle.$$
Since {\bf K} is the field of characteristic not 2, we have 
\begin{equation}\label{eq14}
\langle yu,x,y\rangle=0.
\end{equation}
In a similar way, one can show that 
\begin{equation}\label{eq15}
\langle xv,x,y\rangle=0
\end{equation}
for any $v$ of degree $n-2$.
In \cite{LodayPirashvili} it was proved that set of right-normed bracketing elements on a set forms a base of free (left)-Leibniz algebra on a given set. Therefore, any element $w\in A$ of degree $n-1$ can be expressed as a linear combination of elements of the form $yu$ and $xv$. So by (\ref{eq14}) and (\ref{eq15}), for any $w\in A$, we have $$\langle w,x,y\rangle=0.$$
and it completes our proof. $\square$ 

\section{\label{nn}\ Remarks and open questions}

{\bf 1.} One notices that between $\mathcal{L}eib$ and $\mathcal{L}eib_2$ there is an empty place. 

$$
\begin{array}{c c c c c c c}
\mathcal{L}ie  & \subset  & \mathcal{M}alc & \subset & \mathcal{L}ie_2  &\subset & \mathcal{L}ie_1\\
\rotatebox{-90}{$\subset$}& &  \rotatebox{-90}{$\subset$}                &&\rotatebox{-90}{$\subset$}& & \rotatebox{-90}{$\subset$}\\
\mathcal{L}eib& \subset &  ?  & \subset &\mathcal{L}eib_2 & \subset &  \mathcal{L}eib_1\\
\end{array}
$$

It would be interesting to find a Leibniz analogue of Malcev algebras (if exists)  that makes our picture complete.

{\bf 2.} In \cite{Bremner-Peresi-Sanchez} Bremner, Pereci and  S\'{a}nchez-Ortega defined Malcev dialgebras over a field of characteristic not 2. A dialgebra is said to be {\it (left)-Malcev} dialgebra if it satifies the following identities:
$$(ab)c+(ba)c=0,$$
$$a(b(cd))-b(c(ad))-c((ab)d)-(ac)(bd)-(a(bc))d=0.$$
The first identity is called {\it left anticommutativity} and the second identity is called {\it di-(left)-Malcev} identity. Let $di$-$\mathcal{M}alc$ be the class of Malcev dialgebras. Since Malcev dialgebra is a "noncommutative" version of Malcev algebras, we have
\begin{center}
$\mathcal{M}alc\subset di$-$\mathcal{M}alc$.
\end{center}
It is easy to check that
\begin{center}
$\mathcal{L}eib\subset di$-$\mathcal{M}alc$\end{center} and \begin{center}$di$-$\mathcal{M}alc\subset di$-$\mathcal{L}eib_1$.  
\end{center}
However, \begin{center}$di$-$\mathcal{M}alc\not\subset\mathcal{L}eib_2$.\end{center} In order to verify this let us consider the algebra $C$ from Example \ref{ex4}, that is, the algebra with the multiplication table 
$$e_1e_1=e_4e_4=e_2, \quad e_1e_2=e_3$$
and zero products are omitted. We note that $e_2$ and $e_3$ are left-central elements of $C$, that is, 
$$e_2 a=0 \quad \text{and} \quad e_3 a=0$$ for any $a\in C$, and $C^2$ is spanned by $e_2$ and $e_3$. One can check that $C$ is nilpotent algebra with nilpotency index 4. Then $C$ satisfies left anticommutativity and di-Malcev identity and consequently, $C$ is a Malcev dialgebra. From Example \ref{ex4} we know that $C$ is not a binary Leibniz algebra. Therefore, any Malcev dialgebra is not a binary Leibniz algebra. Hence, the class of Malcev dialgebras can not fill the empty space in the picture.   
   
{\bf 3.} 
Another interesting class of algebras that contains all Leibniz and Malcev algebras is the class of binary Lie dialgebras. Let $di$-$\mathcal{L}ie_2$ be the class of binary Lie dialgebras. In order to derive defining identities of $di$-$\mathcal{L}ie_2$, one can use the general algorithm described in \cite{Kolesnikov}. Then a dialgebra is said to be a {\it binary Lie} dialgebra if it satisfies the following identity: 
$$a(b(cd))+a(c(bd))-((ac)b)d-((ab)c)d$$
$$-b(a(cd))-c(a(bd))+b((ca)d)+c((ba)d)=0.$$    
We rewrite the di-Malcev identity as follows:
$$c(a(bd))-a(b(cd))-b((ca)d)-(cb)(ad)-(c(ab))d=0$$  
and
$$b(a(cd))-a(c(bd))-c((ba)d)-(bc)(ad)-(b(ac))d=0.$$  
To derive the binary Lie dialgebra identity we apply the left anticommutativity identity to the sum of the last two Malcev identities. Then
\begin{center}$di$-$\mathcal{M}alc\subset di$-$\mathcal{L}ie_2$\end{center}
and therefore
\begin{center}$\mathcal{L}eib\subset di$-$\mathcal{L}ie_2$\end{center} and  \begin{center}$\mathcal{M}alc\subset di$-$\mathcal{L}ie_2$.\end{center} Since 
\begin{center}$di$-$\mathcal{M}alc\not\subset\mathcal{L}eib_2$,\end{center}
we obtain \begin{center}$di$-$\mathcal{L}ie_2\not\subset\mathcal{L}eib_2$.\end{center}  
Hence, the class of binary Lie dialgebras can not fill the empty space in the picture.

{\bf 4.} 
The class of Leibniz algebras has no simple non-Lie algebras, but the class of binary Lie algebras has one simple non-Lie algebra, which is in fact Malcev algebra \cite{Grishkov}. Hence the class of binary Leibniz  algebras contains  simple non-Lie algebras. It would also be interesting if one finds non-binary Lie, but simple binary Leibniz algebra (if exists).

{\bf Acknowledgments.} The authors are grateful to Professor P.S. Kolesnikov for his discussion and for essential comments which have been very helpful in improving the manuscript.

\end{document}